\documentclass{amsart}
\usepackage{amsthm}
\usepackage{url}
\usepackage{amsmath}
\usepackage{amsfonts}

\usepackage{amssymb}
\newtheorem{theorem}{Theorem}
\newtheorem{proposition}[theorem]{Proposition}
\newtheorem{lemma}[theorem]{Lemma}

\newtheorem{definition}[theorem]{Definition}

\begin{document}
\title{The Coefficient-Choosing Game}
\author{William Gasarch}
\address{Department of Computer Science,
University of Maryland,
College Park, MD 20742}
\email{gasarch@cs.umd.edu}
\author{Lawrence C. Washington}
\address{Department of Mathematics,
University of Maryland,
College Park, MD 20742}
\email{lcw@math.umd.edu}
\author{Sam Zbarsky}
\address{Department of Mathematics,
Princeton University,
Princeton, NJ 08544}
\email{zbarskysam@gmail.com}

\date{}

\maketitle

\begin{abstract}
Let $D$ be an integral domain. 
Two players, Nora and Wanda, alternately choose coefficients from $D$ for a polynomial
of degree $d$. When they are done, if the polynomial 
has a root in the field of fractions of $D$, then Wanda wins.
If not, then Nora wins.
We determine, for  many $D$, who wins this game.
\end{abstract}

\section{Introduction}

Wanda and Nora are choosing the coefficients of a polynomial 
$$a_3x^3+a_2x^2+a_1x+a_0$$ of degree 3 with integer coefficients.
Wanda wants the final polynomial to have a rational root, and Nora wants the polynomial to have no rational
roots. Wanda starts by choosing $a_2=-12$. Nora responds with $a_3=7$. Wanda then chooses $a_0=4$
(she is not allowed to choose $a_0=0$). It remains for Nora to choose $a_1$ so that
$$
7x^3-12x^2+a_1x+4$$
has no rational root. Fortunately, Nora knows the Rational Root Theorem, which implies that
if she chooses $a_1$ to be an integer then the only possible rational roots of the polynomial are 
$$
\pm1/7, \; \pm 2/7, \; \pm 4/7,\; \pm 1, \; \pm 2, \; \pm 4.
$$
 By choosing $a_1= 10000$, she avoids these roots and thereby wins the game.

This is a simple case of the Coefficient-Choosing Game. In the following, we describe the game and give
winning strategies, depending on what ring is allowed for the coefficients of the polynomial.
We start with the simple example of subrings of the rationals, which relies on unique
factorization and the Rational Root Theorem. When we try to extend the proofs to coefficients lying in finite
extensions of $\mathbb Q$, we need to use some algebraic number theory to handle the possibility
of non-unique factorization. But we also meet a deep result from transcendental number theory concerning the $S$-unit
equation, which is an important tool in Diophantine equations.

Section \ref{se:R} treats the case $D=\mathbb R$ and Section \ref{se:F}  
considers the case  $D$ is a finite field, where an unexpected
special case arises in degree 3 that is related to permutation polynomials.

\section
{The Game}\label{game}

The parameters of the game are an integral domain $D$ and a degree $d\ge 2$ (the case $d=1$ is trivial).
Either Wanda (``wants root'') or Nora (``no root'') is chosen to be player I, and the other becomes player II.
Here are the rules:
\begin{enumerate}
\item Player I goes first. 
\item Players I and II alternately choose coefficients from $D$ for
a polynomial of degree $d$. The coefficients are
not chosen in any pre-determined order.
\item They must choose $a_d\ne 0$ and $a_0\ne 0$ (in order to avoid trivial situations).
\item If the final polynomial has a root in the field of fractions of $D$, then Wanda wins, otherwise Nora wins.
\end{enumerate}

We assume both players play perfectly. The statement
{\it Player I (II) wins} means that Player $I$ ($II$) has a strategy that wins no matter
what the other player does.

To get a feel for the game, the reader might want to try playing the final move
in the following situations, both as Nora
and as Wanda:
\begin{enumerate}
\item $D=\mathbb Q$ and $a_0$ remains yet to be chosen: $\frac1{6}x^3+24x^2-\frac14x+a_0$.
\item $D=\mathbb R$ and $a_1$ remains to be chosen: $5x^4-6x^3-3x^2+a_1x+10$.
\item $D=\mathbb R$ and $a_1$ remains to be chosen: $x^3+3x^2+a_1x-5$.
\item $D=\mathbb Z[\sqrt{2}]$ and $a_1$ remains to be chosen: 
$x^3+(\sqrt{2}-3)x^2+a_1x-4(1+\sqrt{2})$.
\end{enumerate}
All of these can be won by Wanda if she is the one playing. If Nora plays,
the first two examples can easily be won by her after some numerical experimentation. 
The third example is a win for Wanda, no matter what Nora plays. 
However, the last example might not be as easy. The ring $\mathbb Z[\sqrt{2}]$
has unique factorization, which helps, but it has infinitely many units, which causes problems with arguments
that need a number to have finitely many divisors. This is why we will use more powerful machinery
in Section \ref{se:fext} to prove there is a choice of $a_1$ for which Nora wins. We show how to find
$a_1$ in Section \ref{calc}.

It may seem that the last player has the advantage, and this is often the case.
In Sections \ref{se:ZQ} and \ref{se:fext}, we show that if $D=\mathbb Z$ or $D=\mathbb Q$,
or if  $D$ is any subring of a
finite extension of $\mathbb Q$, then the last player wins. 
In Sections~\ref{se:R}, \ref{se:C}, and \ref{se:F}, we find
the exact win conditions for 
the reals, algebraically closed fields, and finite fields.

\section{A Useful Observation}\label{observ}

The following lemma means that we can concentrate most of our efforts on the situation where Nora makes the last play.

\begin{lemma}\label{le:last} 
If Wanda makes the last play,  then she wins.
\end{lemma}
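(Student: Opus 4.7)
The plan is to show that Wanda can adopt the strategy of playing arbitrarily on all moves except her last, and on her last move using the freedom of the remaining coefficient $a_i$ to force a root in the field of fractions $K$ of $D$. The argument then splits according to which index $i$ is the one left unchosen at the end.

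In the ``middle'' case $0 < i < d$, I would simply set $a_i = -\sum_{j \neq i} a_j$. This value lies in $D$ and makes $1 \in K$ a root; since no nonzero requirement applies to middle coefficients, nothing else has to be checked. For $i = 0$, I would parametrize candidate roots by $c \in D \setminus \{0\}$ and set $a_0 = -\sum_{j \geq 1} a_j c^j$, which is automatically in $D$ and makes $c$ a root. The only constraint left is $a_0 \neq 0$, and this fails exactly when $c$ is a zero of the polynomial $f(c) := \sum_{j \geq 1} a_j c^j$, which has degree $d$ (leading coefficient $a_d \neq 0$) and hence at most $d$ zeros in $D$. For $i = d$ the argument is symmetric: taking roots of the form $1/q$ with $q \in D \setminus \{0\}$ gives $a_d = -\sum_{j < d} a_j q^{d-j}$, which is in $D$ and is a nonzero polynomial of degree $d$ in $q$ with leading coefficient $a_0 \neq 0$, so again only finitely many bad choices of $q$ must be avoided.

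The main obstacle is therefore purely a size issue: the endpoint cases $i \in \{0, d\}$ need $D$ to contain enough nonzero elements to sidestep the finitely many zeros of $f$ (or of its counterpart for $i = d$). For any infinite integral domain this is automatic, so the lemma applies to every setting treated in Sections~\ref{se:ZQ}, \ref{se:fext}, \ref{se:R}, and \ref{se:C}; the finite-field situation of Section~\ref{se:F} is genuinely different and is analyzed separately there.
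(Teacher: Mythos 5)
Your argument for the generic case is essentially the paper's: make $1$ a root when a middle coefficient remains, and for $i=0$ (and, by reversal, $i=d$) avoid the finitely many choices that would force $a_0=0$ or $a_d=0$. That part is fine. The genuine gap is in your last paragraph: the lemma is stated for an arbitrary integral domain $D$, and your proof only covers domains with more than $d$ elements (a finite integral domain is a field, so what is missing is exactly $\mathbb F_q$ with $q\le d$). Your claim that the finite-field situation ``is analyzed separately'' in Section~\ref{se:F} is backwards: the proof of Theorem~\ref{th:F} begins by invoking Lemma~\ref{le:last} for the case where Wanda plays last, so the lemma itself must already handle all finite fields; nothing in Section~\ref{se:F} re-proves the Wanda-plays-last case.

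The missing idea is that when $q\le d$ Wanda cannot rely solely on her final move; she must use her earlier moves to defuse the $a_0a_d\ne 0$ constraint. Concretely: if $d\ge 4$, Wanda has enough moves before her last one to ensure that $a_0$ and $a_d$ are both chosen (by her or by Nora) before the final play, after which setting the last coefficient so that $1$ (or some other element) is a root causes no conflict with the nonvanishing rules. If $d=3$, Nora is Player I, and Wanda wins by pairing: when Nora picks one of $a_0,a_3$ Wanda sets the other equal to it, and likewise for $a_1,a_2$, so $-1$ is a root of the final polynomial. If $d=2$, the only remaining field is $\mathbb F_2$; Wanda is Player I, plays $a_1=0$, and then matches $a_0=a_2$, making $1$ a root. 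Without this supplementary argument the lemma as stated is not proved, and the application in Section~\ref{se:F} would be circular.
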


\begin{proof}
If Wanda plays last then, before making the last move, she is looking at a polynomial of the form
$$g(x) + a_ix^i$$
and wants to choose $a_i$ such that the polynomial has a root. She sets $a_i=-g(1)$.
Then 1 is a root. Therefore, she wins, unless $i=0$ or $d$ and $g(1)=0$, in which case she would break the rule that
$a_d a_0\ne 0$. 

In the case $i=0$, since $g(x)$ has only finitely many zeros, she wants to choose $r\in D$ with $g(r)\ne 0$ and then
choose $a_0=-g(r)$. If the cardinality of $D$ is larger than the degree of $g$, namely $d$,
then this is certainly possible.

If $i=d$, let $f_1(x)=x^d f(1/x) = a_0x^d+\cdots +a_d$ be the reversed polynomial.
If the cardinality of $D$ is larger than $d$ then it is possible to choose $a_d\ne 0$ such that
$f_1$ has no zeros, by what we have just proved. Since $a_da_0\ne 0$, we see that
$f$ has no zeros if and only if $f_1$ has no zeros.

Therefore, we are reduced to considering
finite fields $\mathbb F_q$ with $q\le d$. 

If $d\ge 4$, Wanda can arrange that either she or Nora chooses $a_d$ and $a_0$ before the final play. Then, when Wanda chooses the final coefficient,
the problem with $a_da_0\ne 0$ does not arise, so she wins. 

If $d=3$, then Nora is Player I. When Nora chooses $a_0$ or $a_3$, then Wanda chooses the other, setting $a_0=a_3$. When Nora chooses $a_1$ or $a_2$,
Wanda chooses the other, setting $a_1=a_2$. The final polynomial has $-1$ as a root.

If $d=2$, we have to consider only the case $D=\mathbb F_2$ (since $q\le d$ is all that remains). Wanda is Player I and she chooses $a_1=0$.
After Nora chooses $a_0=1$ or $a_2=1$, Wanda chooses the other. Then $1$ is a root, so Wanda wins.
\end{proof}

Note that the situation with finite fields required the additional argument: In the finite field with $p$ elements (where $p$ is prime),
there is no way for Wanda to choose $a_0\ne 0$ so that $x^p-x+a_0$ has a root in this field.
We'll say more about finite fields in Section \ref{se:F}.

\section{Subrings of $\mathbb Q$}\label{se:ZQ}

The following result is an extension of the ideas hinted at in the Introduction.
\begin{theorem}\label{th:ZQ} 
If $\mathbb Z\subseteq D \subseteq \mathbb Q$, the last player wins.
\end{theorem}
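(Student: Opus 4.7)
The plan is as follows. By Lemma~\ref{le:last}, if Wanda plays last she wins, so it suffices to exhibit a winning strategy for Nora when she plays last. Her moves in earlier rounds can be arbitrary (for instance, any integer value on any still-unassigned coefficient, subject to $a_0 a_d \ne 0$); all the work is in her final move. At that point she sees $g(x)+a_i x^i$ for some $i$ and must choose $a_i \in D$ so that no rational root remains. I treat three cases according to $i$.

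\textit{Interior case} $1 \le i \le d-1$: both $a_d$ and $a_0$ are already fixed and nonzero. Let $L \in \mathbb Z_{>0}$ clear the denominators of $g$. For any $a_i \in \mathbb Z$, the polynomial $Lg(x) + La_i x^i$ lies in $\mathbb Z[x]$ with fixed leading coefficient $La_d$ and fixed constant term $La_0$. By the Rational Root Theorem, its rational roots lie in a finite set $\{r_1,\ldots,r_k\}$ independent of $a_i$. Each nonzero $r_j$ is killed by at most one value of $a_i$ (namely $a_i = -g(r_j)/r_j^i$, since $a_0 \ne 0$ forces $r_j \ne 0$), so Nora selects any $a_i \in \mathbb Z$ avoiding these $\le k$ forbidden values.

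\textit{Boundary case} $i=0$: $a_d$ is fixed and $g(x) = a_d x^d + \cdots + a_1 x$. Let $L$ clear denominators of $g$, and take $a_0 = P$ for a prime $P$ larger than $|La_d|$. Applied to $Lg(x)+LP \in \mathbb Z[x]$, the Rational Root Theorem gives candidate roots $p/q$ in lowest terms with $q \mid La_d$ and $p \mid LP$. Since $P$ is prime and coprime to $La_d$, the numerator $p$ is either a divisor of $L$ (giving a finite set $S_1$ of candidate roots, independent of $P$) or of the form $p_1 P$ with $p_1 \mid L$ (giving candidates of the form $Pc_k$ for finitely many fixed nonzero rationals $c_k$). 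Ruling out the roots in $S_1$ excludes one $P$ each, via $P = -g(r)$. Ruling out a candidate $Pc_k$ reduces to $g(Pc_k) = -P$, which, expanded, is a polynomial-in-$P$ equation of degree $d \ge 2$ with nonzero leading coefficient $a_d c_k^d$, hence has at most $d$ solutions. Nora therefore excludes only finitely many primes and picks any remaining one.

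\textit{Boundary case} $i=d$: the reversal $f(x) \mapsto x^d f(1/x)$ interchanges $a_0$ and $a_d$, and since $a_0 a_d \ne 0$ it preserves the existence of rational roots, reducing this case to $i = 0$. The main obstacle is the $i=0$ case, where the candidate roots produced by the Rational Root Theorem depend on the very coefficient Nora is choosing; the key idea is that taking $a_0$ to be a large prime $P$ splits the candidates into a fixed finite set plus a family that scales as $P$, and in the latter family the root condition becomes a polynomial equation in $P$ with only finitely many solutions.
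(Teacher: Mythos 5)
Your proposal is correct and follows essentially the same route as the paper: Lemma~\ref{le:last} for Wanda playing last, then a last-move analysis for Nora split into the interior case (Rational Root Theorem gives a candidate set independent of $a_i$, each candidate forbidding at most one value), the $i=0$ case (take $a_0$ to be a suitably chosen prime), and $i=d$ by reversing the polynomial. The only deviation is cosmetic: in the $i=0$ case the paper writes a root as $x_1/a_d$ and uses primality to force $x_1$ or $h_1(x_1)$ to divide a fixed integer, whereas you split the candidates into a $P$-independent set plus a family $Pc_k$ killed by a degree-$d$ polynomial equation in $P$ — the same underlying idea, and your counting goes through.
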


\begin{proof}
If  Wanda goes last then she wins, by Lemma~\ref{le:last}.

The following result shows that Nora wins if she plays last.

\begin{proposition}\label{PropQ}
Let $d\ge 2$ and fix $i$ with $0\le i \le d$. Choose $a_j\in \mathbb Q$ for $j\ne i$ subject to the constraints $a_0\ne 0$ and $a_d\ne 0$.
 Then there exists an integer $0\ne a_i\in \mathbb Z$ such that the polynomial
$f(x)=a_dx^d+a_{d-1}x^{d-1}+\cdots + a_1x+a_0$ has no rational roots.
\end{proposition}
\begin{proof}
Recall the {\it Rational Root Theorem}: {\it Let $R$ be a UFD and let $f(x)=a_dx^d+a_{d-1}x^{d-1}+\cdots + a_1x+a_0\in  R[x]$
with  $a_0a_d\ne 0$. If $r$ is in the field of fractions of $R$ and is a root of $f$, then the numerator of $r$ divides $a_0$ and the denominator
of $r$ divides $a_d$.}

We multiply all the coefficients of $f(x)$ by some non-zero integer $N$ to clear denominators.
Therefore, we can assume that all of the coefficients $a_j$ are integers, and we need to find a suitable integer $a_i$
with $N\mid a_i$. Then we can divide by $N$ and obtain the result.

Assume first that $i\ne 0, d$. The Rational Root Theorem (for $R=\mathbb Z$) implies that there is a finite set $S$ of
possibilities for rational roots of $f(x)$, where $S$ is independent of the choice of the integer $a_i$. 
Write $f(x)=g(x)+a_ix^i$.
Let $M=\text{Max}(|g(s)|)$ and let $m=\text{min}(|s|)$,  where $s$ runs through the elements of $S$.
Then $m\ne 0$, because $a_0\ne 0$. If $|a_i|> M/m^i$, then $f(s)\ne 0$ for $s\in S$, so $f$ has no rational
roots. Therefore, we can choose $a_i$  to be any multiple of $N$ satisfying this inequality and obtain the desired 
coefficient.

Now suppose that $i=0$. We then have $f(x)=xh(x)+a_0$, with $0\ne a_0\in N\mathbb Z$ still to be chosen,
and where $h(x)\in \mathbb Z[x]$ has already been determined. If we were asking only for integer roots $x$, things would 
be easy: we could take $a_0$ to be $N$ times a suitable prime $p\nmid N$. This could not be factored as $-xh(x)$ except possibly for finitely
many choices of $x$, namely those where $\pm x$ or $\pm h(x)$ is a divisor of $N$.
But the proposition allows $x$ to be rational, so we need to strengthen the argument.

We choose $a_0=Np$, where $p\nmid N$ is a prime to be specified later and $N$ was used 
above to clear denominators. The Rational Root Theorem implies that
a rational root $r$ of $f(x)$ has the form $x_1/a_d$, where $x_1$ is an integer.
Rewrite $f(r)=0$ as
$$
x_1^d+a_{d-1}x_1^{d-1}+a_{d-2}a_dx_1^{d-2}\cdots + a_d^{d-2}a_1 x_1 = -a_d^{d-1} Np.
$$
This may be written as
$$
x_1 h_1(x_1)= -a_d^{d-1} Np,
$$
where $h_1\in \mathbb Z[x]$. Therefore, either $h_1(x_1)$ or $x_1$ is a (positive or negative) divisor of
$a_d^{d-1}N$. This shows that there a finite set of possibilities for $x_1$, independent of the choice
of $p$. Choose
$a_0=pN$ not equal to any possible value of $-x_1 h_1(x_1)/a_d^{d-1}$. Then the resulting polynomial
$f(x)$ has no rational roots.

Finally, suppose $i=d$, so all coefficients have been chosen except for the leading coefficient.
Let 
$$f_1(x)=x^d f(1/x) = a_d+a_{d-1}x+\cdots +a_0x^d
$$
be the reversed polynomial. By what we just did, we can find $0\ne a_d\in N\mathbb Z$ so that $f_1(x)$
has no rational roots. Since we have $a_0a_d\ne 0$, the roots of $f$ and $f_1$ are non-zero, so
$f(x)$ also has no rational roots.

This completes the proof of Proposition \ref{PropQ}. \end{proof}
This also finishes the proof of Theorem \ref{th:ZQ}. \end{proof}

\smallskip
\noindent
{\bf Remark.} 
The key to the proof of Proposition \ref{PropQ} is that a non-zero integer has only a finite number of divisors.
The proof can be extended to any UFD with a finite number of units and infinitely many irreducibles.

\section{$D$ is a Subring of a Finite Extension of $\mathbb Q$}\label{se:fext}

What happens when $\mathbb Z$ is replaced, for example, by the ring of algebraic integers in a finite extension of $\mathbb Q$? The results of the preceding section can be
generalized to this situation.
We prove the following theorem.

\begin{theorem}\label{th:fext} 
Let $D$ be a subring of a finite extension of $\mathbb Q$.
Whoever plays last wins.
\end{theorem}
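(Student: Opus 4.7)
By Lemma~\ref{le:last}, Wanda wins whenever she plays last, so the content of the theorem is that Nora wins when she plays last. My plan is to follow the architecture of Proposition~\ref{PropQ}, replacing element-level divisibility in $\mathbb{Z}$ with ideal divisibility in the ring of integers $\mathcal{O}_K$ of a number field $K\supseteq D$ (to cope with non-unique factorization), and handling the extra freedom furnished by an infinite unit group $\mathcal{O}_K^{\ast}$ via the $S$-unit equation. Multiplying each chosen coefficient by a common positive integer $N$ puts the polynomial in $\mathcal{O}_K[x]$ without changing the roots in $K$, and Nora may draw her final coefficient from $N\mathcal{O}_K\cap D$ (which contains $N\mathbb{Z}$) and rescale at the end.

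If $r\in K$ is a root of $f\in\mathcal{O}_K[x]$, then $\alpha:=a_d r$ satisfies a monic polynomial over $\mathcal{O}_K$, so $\alpha\in\mathcal{O}_K$ and
\[
 \alpha\bigl(\alpha^{d-1}+a_{d-1}\alpha^{d-2}+a_d a_{d-2}\alpha^{d-3}+\cdots+a_d^{d-2}a_1\bigr)=-a_d^{d-1}a_0,
\]
so the ideal $(\alpha)$ divides $(a_d^{d-1}a_0)$. The case $i=d$ reduces to $i=0$ via the reciprocal polynomial $x^d f(1/x)$, so there are two essential sub-cases. For $1\le i\le d-1$ both $a_0$ and $a_d$ are fixed, and $(\alpha)$ ranges over the principal divisors of a fixed ideal, say with generators $\beta_1,\dots,\beta_m$; every candidate root is therefore $r=u\beta_k/a_d$ for some $u\in\mathcal{O}_K^{\ast}$, and $f(r)=0$ forces
\[
 a_i=-\sum_{j\ne i}a_j(\beta_k/a_d)^{j-i}u^{\,j-i}.
\]
For $i=0$ I restrict Nora to choices of the form $a_0=Np$ with $p$ a rational prime, which keeps $(\alpha)$ close to a fixed divisor of $(a_d^{d-1}N)$ up to the factorization of $(p)$ in $\mathcal{O}_K$.

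The crux, and where the $S$-unit theorem enters, is to show that the forbidden set does not exhaust Nora's legal choices. Let $S$ consist of the archimedean places together with every prime of $\mathcal{O}_K$ dividing $N$, $a_d$, or any coefficient already played. Dividing the middle-case equation by $a_i$, or the $i=0$ equation by $-a_d^{d-1}Np$, recasts each as an identity $e_1+\cdots+e_t=1$ whose terms $e_\ell$ are $S$-units in the joint unknowns (namely $(u,a_i)$ or $(\alpha,p)$), provided $a_i$ is itself an $S$-unit --- a condition Nora can freely impose since $\mathbb{Z}\subseteq D$ contains infinitely many $S$-unit rational integers. Evertse's theorem on $S$-unit equations in many variables, together with the standard induction that peels off degenerate vanishing subsums (each of which is a lower-degree instance of the same problem), then bounds the number of joint solutions by a finite quantity depending only on $K$, $S$, and $d$. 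Consequently only finitely many values of $a_i$ (resp.\ $p$) are forbidden, and Nora chooses from the infinite complement. The main obstacle I anticipate is verifying the non-degeneracy hypotheses so that Evertse's theorem applies cleanly; degenerate solutions must be stripped off recursively, and this bookkeeping, rather than any deeper difficulty, is where most of the real work will lie, consistent with the paper's promise of an explicit computation in Section~\ref{calc}.
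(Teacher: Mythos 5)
Your reduction to the case where Nora plays last, and your treatment of the middle coefficients $1\le i\le d-1$, are essentially the paper's own argument: you confine the candidate roots to finitely many cosets $u\beta_k/a_d$ of the unit group (you get this from ideal divisibility of $(a_d r)$ in $\mathcal{O}_K$, where the paper instead inverts an integer $N$ so that $A[1/N]$ is a PID, Lemma~\ref{le:local}, and applies the Rational Root Theorem there, Lemma~\ref{le:ufd} --- an equivalent reduction), then turn ``$r$ is a root'' into an $S$-unit equation and invoke Theorem~\ref{th:sunit}. Two loose ends there are fillable: since $a_0$ and $a_d$ are both nonzero and fixed, every non-degenerate solution tuple contains the components indexed by $j=0$ and $j=d$, whose ratio determines $r^d$, hence $r$ and then $a_i=-\sum_{j\ne i}a_jr^{j-i}$ up to finitely many values; and in your normalization (dividing by $a_i$) a vanishing subsum is just a nonzero polynomial identity $\sum_{j\in I}a_jr^j=0$, which has finitely many roots, so no recursive peeling is actually needed.

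The genuine gap is in your case $i=0$ (to which you reduce $i=d$). With $a_0=Np$ and $p$ a varying rational prime, neither $p$ nor $\alpha=a_dr$ is an $S$-unit for any finite set $S$ fixed in advance: the ideal $(\alpha)$ is forced to absorb primes above $p$, and these escape every fixed $S$. Hence Theorem~\ref{th:sunit} simply does not apply to your ``joint unknowns $(\alpha,p)$.'' If instead you let $S$ grow with $p$, Evertse's bound only limits the number of roots for each individual choice of $a_0$, which is useless: what must be shown is that some (indeed all but finitely many) admissible $a_0$ yield \emph{no} root. This is precisely the obstruction --- infinitely many units --- that prevents the prime trick of Proposition~\ref{PropQ} from transplanting to number fields, and it is why the paper makes the free coefficient itself an $S$-unit: it reduces $i=0$ to $i=d$ and sets $a_d=N^n$, so the finitely many candidate-root classes are independent of the choice, and the middle-case argument applies verbatim. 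If you repair your plan the same way (taking $a_0$ to be a power of $N$), you will also hit the degenerate subcase the paper must treat separately, namely when all intermediate coefficients vanish and $f=a_dx^d+a_0$, where the $S$-unit relation collapses to finitely many terms; the paper disposes of it with an unramified prime, and your proposal as written anticipates neither the need for an $S$-unit-valued free coefficient nor this binomial subcase. The claim of Theorem~\ref{th:fext} is correct, but the $i\in\{0,d\}$ portion of your argument must be reworked along these lines.
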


\begin{proof}
If  Wanda goes last then, by Lemma~\ref{le:last},
 Wanda wins.

If Nora plays last, she wins by the following result.

\begin{theorem}\label{th:polyfext} Let $K$ be a finite extension of $\mathbb Q$. 
Let $d\ge 2$ and fix $i$ with $0\le i \le d$. Choose $a_j\in K$ for $j\ne i$ subject to the constraints $a_0\ne 0$ and $a_d\ne 0$.
 Then there exists an integer $0\ne a_i\in \mathbb Z$ such that the polynomial
$f(x)=a_dx^d+a_{d-1}x^{d-1}+\cdots + a_1x+a_0$ has no roots in $K$.
\end{theorem}
\begin{proof}
The general proof requires
some ideas from algebraic number theory. In order not to have these obscure the main structure
of the proof, we first give the proof for $K=\mathbb Q$ and then indicate what needs to be modified for the general case. Of course, Proposition \ref{PropQ} already has the result for
$K=\mathbb Q$, but the new ideas are easier to present in this case.

\begin{definition}
Let $S$ be a finite set of primes of $\mathbb Z$.
Define the $S$-units to be 
$$U_S = \left\{ u\frac{a}{b} \,\mid\, a,b\hbox{ are products of primes in $S$ and $u$ 
is a unit of $\mathbb Z$}  \right\}.$$
If $S$ is empty, we take $U_S$ to be the set of units in $\mathbb Z$, namely $\{\pm 1\}$.
Note that if $S$ is non-empty, then $U_S$ is infinite.
\end{definition}

\begin{lemma}\label{le:ufd} 
Suppose $R$ is a UFD and $K$ is the field of fractions of $R$. 
Let $f(x)=a_dx^d+\cdots +a_0\in K[x]$ with $a_da_0\ne 0$ and with $a_i\in R$ for some $i\ne 0, d$.
Then there is a  finite set $\{y_1, \dots, y_m\}$ in $K$ that depends only 
on $\{a_j\,\mid\, j\ne i\}$ (that is, the set is independent 
of the choice of $a_i\in R$) such that if $z\in K$ is a root of $f(x)$, then
$z/y_j$ is a unit of $R$ for some $j$.
\end{lemma}

\begin{proof} Let $A$ be a common denominator of the $a_j$ for $j\ne i$. Then $Af(x)\in R[x]$. If $f(r/s)=0$ 
with $r, s\in R$ and $\gcd(r, s)=1$, then the Rational Root Theorem 
says that $r\mid Aa_0$ and $s\mid Aa_d$. 
Up to multiplication by units of $R$, there are only finitely many divisors of $Aa_0$ and
only finitely many divisors of $Aa_d$. Therefore, up to multiplication by units of $R$, 
there are only finitely many possibilities for $r/s$. 
\end{proof}

Let $N$ be a nonzero integer. Define $\mathbb Z[1/N]$ to be the set of rational numbers that can be expressed
as polynomials in $1/N$ with coefficients in $\mathbb Z$. These are the rational numbers that can be written
as (possibly non-reduced) fractions $a/N^n$ for some integers $a$ and $n$.

\begin{lemma} 
Let $S$ be the set of primes dividing $N$. The units of the ring $\mathbb Z[1/N]$ are the $S$-units 
$U_S$.
\end{lemma}

\begin{proof}
 Let $u\in U_S$. Then the factorization of $u$
contains only primes from $S$. Some of these primes might occur in the factorization with negative exponents,
but there is a power of $N$, say $N^m$, such that the prime factorization of $N^m u$ has only nonnegative
exponents. This means that $N^mu\in \mathbb Z$, so $u\in \mathbb Z[1/N]$.

Since $u\in U_S$, there exists $v\in U_S$ such that $uv=1$, and the same argument shows that $v\in \mathbb Z[1/N]$.
Therefore, the inverse of $u$ is in $\mathbb Z[1/N]$, so $u$ is a unit of $\mathbb Z[1/N]$.

Conversely, suppose $u$ is a unit of $\mathbb Z[1/N]$. Then there exists $v\in \mathbb Z[1/N]$ with $uv=1$.
There exist $m, n$ such that $N^mu\in \mathbb Z$ and $N^n v\in \mathbb Z$. We have $(N^m u) (N^n v)= N^{m+n}$,
and the right side is a product of primes from $S$. Since the numbers on the left are integers,
their factorizations also contain only primes from $S$. Since $N^m$ and $N^m u$ have prime factors
only from $S$, the same is true for $u$. Therefore, $u\in U_S$. 
\end{proof}

We now need to introduce a powerful tool from transcendence theory, the $S$-unit equation.
It is used, for example, to show that there are only finitely many integer solutions
to certain Diophantine equations. See \cite{HiSi}.

Let's start with an example. Let $U_{2,3}$ be the set of rational numbers of the form
$\pm 2^a3^b$, where $a, b$ are integers.
It is possible to have a sum of three elements of $U_{2,3}$ equal to 1. Two such relations are
$$
\frac32 + \frac{-1}3+\frac{-1}6 = 1 \text{ and } 3+(-1) +(-1) = 1
$$
(there are a few more). 
Are there infinitely many such relations? In this form, the answer is Yes:
$$
3^n + (-3^n) + 1 = 1
$$
for all $n\in \mathbb Z$. But this seems like cheating. We are using a zero subsum
to obtain the relations. The $S$-unit Theorem says that if we do not allow zero subsums,
then there are only finitely many relations.
The following is Theorem 3 of~\cite{sunit} for the case $K=\mathbb Q$.

\begin{theorem}\label{th:sunit} 
Let $a_1, \dots, a_n\in \mathbb Q^{\times}$. Suppose that $S$ has cardinality $s$. Then the equation
$$
a_1u_1+\cdots +a_nu_n=1
$$
with $u_i, \dots, u_n\in U_S$ with 
$$
\sum_{i\in I} a_iu_i\ne 0 \text{ for each non-empty subset } I\subseteq \{1, \dots, n\}$$
has at most $(2^{35}n^2)^{n^3s}$ solutions.
\end{theorem}
For the example of $S=\{2, 3\}$ above, the theorem says that there are only finitely many
relations such as $\frac32 + \frac{-1}3+\frac{-1}6 = 1$. The rest must be the ``cheats''
such as $3^n + (-3^n) + 1=1$ with zero subsums.

If  $0\ne a_0\in K$, we can apply the theorem to equations of the form
$$
a_1u_1+\cdots +a_nu_n=-a_0.
$$
Simply divide by $-a_0$ to obtain the form in the theorem.
(We use $-a_0$ to agree with later equations.)

\smallskip
\noindent
{\bf Proof of Theorem~\ref{th:polyfext}.}

We start with the case where $i\ne 0, d$. Write
$$f(x)=a_dx^d+\cdots +a_0,$$ 
where we will pick $a_i\in \mathbb Z$ later.
Let $I_0=\{j\, \mid \, j\ne i, \; 1\le j\le d, \, a_j\ne 0\}$.

Looking forward to the case where $\mathbb Q$ is replaced by $K$, we choose an integer $N>1$ and
work with the ring $\mathbb Z[1/N]$, which is a UFD (its primes are the primes that do not divide $N$).

 From Lemma \ref{le:ufd},  
there is a  finite set $\{y_1, \dots, y_m\}$ in $\mathbb Q$ such that if $z\in \mathbb Q$ is a root of $f(x)$, then
$z/y_j$ is a unit of $\mathbb Z[1/N]$ for some $j$. Choose an integer $M$ whose factorization into primes
includes all primes that occur in the factorizations of $y_1, \dots, y_m$, and such that $M$ is a 
multiple of $N$. Then the units of $\mathbb Z[1/N]$ are contained in the units of $\mathbb Z[1/M]$, so
the set $\{y_1, \dots, y_m\}$ still has the property from Lemma \ref{le:ufd}. But $y_1, \dots, y_m, 1/y_1, \dots, 1/y_m$
are in $\mathbb Z[1/M]$, so we have proved that every root $z$ of $f(x)$ is in $U_S$, where
$S$ is the set of primes dividing $M$.

Let
$$
\text{SOLN}=\{r\in \mathbb Q^{\times} \,\mid\, (\exists n_r\ge 0) [ rg(r) + N^{n_r} r^i  = -a_0] \}.$$
Then $\text{SOLN}\subseteq U_S$. 

If $r\in \text{SOLN}$, then there is a unique corresponding $n_r$ (because $N>1$).
Let 
$$\vec u_r = (r^d, \dots, r^{i+1}, N^{n_r}r^i, r^{i-1}, \dots, r),$$
and let $u_{r,j}=r^j$ if $j\ne i$ and $u_{r,i}=N^{n_r} r^i$.
For each $r\in \text{SOLN}$,
\begin{equation}\label{initialsum}
\sum_{j\in I_0} a_j u_{r,j} + 1\cdot u_{r,i}=-a_0.
\end{equation}

Suppose that SOLN is infinite.
As $r$ varies through infinitely many values, so do all the entries of $\vec u_r$, except possibly for
the entry $N^{n_r}r^i$. Since $0\ne a_d\in I_0$, Equation (\ref{initialsum}) yields infinitely many relations.
Theorem \ref{th:sunit} implies that, for each $r$ outside of a finite set, there is a subsum of Equation (\ref{initialsum}) that is 0.
Since there are only finitely many subsets of $I_0$, there is a subset $\emptyset \ne I\subset I_0$
such that either
$$\sum_{j\in I} a_j u_{r,j} = 0 \text{ for infinitely many $r$}
$$
or
$$
 \sum_{j\in I} a_j u_{j,r} + 1\cdot N^{n_r} r^i=0  \text{ for infinitely many $r$}.
$$
The first possibility is impossible, because the nonzero polynomial
$$
\sum_{j\in I} a_j X^j
$$
cannot have infinitely many zeros.  Subtracting the second possibility from Equation (\ref{initialsum}) implies that 
$$
\sum_{j\not\in I} a_j X^j= -a_0
$$
has infinitely many solutions. Since $a_0\ne 0$, this is a non-trivial polynomial relation, so we again have a contradiction.

Therefore, SOLN is finite. Each $r$ has a unique 
$n_r$. Let $n$ be greater
than the largest $n_r$. Then $a_i=N^n$ yields a polynomial with no roots in $\mathbb Q$.
This completes the case $i\ne 0, d$.

Now suppose that $i=d$.  We take $a_d=N^n$ for some
yet-to-be-determined $n$. Then $a_d$ is a unit of $\mathbb Z[1/N]$, so it does not affect divisibility, 
and the proof of Lemma \ref{le:ufd}
yields a set $\{y_1, \dots, y_m\}$ as before.
The proof now proceeds as previously. If we suppose that there are infinitely many distinct relations as $r$ varies,
then we obtain a contradiction and deduce that SOLN is finite.

However, there is the possibility that the infinitely many $r\in \text{SOLN}$ yield only finitely many relations in Equation (\ref{initialsum}).
If this happens, then $a_{d-1}=a_{d-2}=\cdots = a_1=0$, since the corresponding components of $\vec u_r$ take on infinitely many
values as $r$ runs through infinitely many elements of $\text{SOLN}$.  Therefore, the polynomial is
$f(x)=a_dx^d+a_0$, with $a_0$ fixed and $a_d$ to be determined. Since $K$ is a finite extension
of $\mathbb Q$, there are infinitely many primes $p$ such that $x^d-p$ has no roots in $K$
(any $p$ that does not ramify in $K/\mathbb Q$ suffices). Choose $a_d=-N^{kd}a_0^{d+1}p$,
where $k$ is chosen large enough to make $a_d\in \mathbb Z$. Then
$$
-f(x)/a_0 =p(N^ka_0x)^d-1,
$$
which has no roots in $K$.

Finally, suppose $i=0$. Let $f_1(x)=x^d f(1/x) = a_0x^d+ \cdots + a_d$ be 
the reversed polynomial. Since $a_0a_d\ne 0$, $f$ has a zero in $K$ if and only if $f_1$
has a zero in $K$. The above shows that there exists $a_0\ne 0$ such that $f_1$ has no roots in
$K$, as desired.

This completes the proof when $K=\mathbb Q$.

\noindent
{\bf The General Case}

We now indicate what needs to be done when $K$ is a finite extension of $\mathbb Q$. Let $A$ be the
ring of algebraic integers in  $K$. The following key step allows us to use UFD's during the rest of the proof.

\begin{lemma}\label{le:local}
Let $K$ be a finite extension of $\mathbb Q$ and let $A$ be the ring of algebraic integers in $K$.
There exists $0\ne N\in \mathbb Z$ such that $A[1/N]$ is a UFD.
\end{lemma}

\begin{proof}
If $J$ is an ideal of $A[1/N]$, then there is an ideal $I$ of $A$ such that $IA[1/N]=J$ (this is a standard fact
about localization of rings).

The classical result on the finiteness of the class number of $A$ says that there is a set $\{I_1, \dots, I_h\}$
of ideals of $A$ with the following property: If $I$ is a nonzero ideal of $A$, then there
are nonzero $r, s\in A$ and $i\le h$ such that $rI=sI_i$. 

Choose $0\ne N\in I_1\cap I_2\cap \cdots \cap I_h$ (such integers exist; for example, the index of this intersection
of ideals in $A$). Let $J$ be a nonzero ideal of $A[1/N]$, and choose $I$ so that $IA[1/N]=J$.
Let $r, s\in A$ and $i\le h$ be such that $rI=sI_i$. Then 
$$
rJ = rIA[1/N]=sI_iA[1/N] = sA[1/N].
$$
The last equality is because $N\in I_i$, so $1=N\cdot (1/N) \in I_iA[1/N]$, which means $I_iA[1/N]=A[1/N]$.
Therefore, $rJ$ is a principal ideal of $A[1/N]$, which implies that $J$ is principal
(this last deduction requires a little machinery, for example Dedekind domains).

Since $J$ was an arbitrary ideal of $A[1/N]$, we have proved that $A[1/N]$ is a PID, therefore a UFD.
\end{proof}

If $S$ is a finite set of prime ideals of $A$, define $U_S$ to be the nonzero elements $u\in K$ such that
the prime ideal factorization of the fractional ideal of $A$ generated by $u$ contains only primes from $S$. 
If $0\ne N\in \mathbb Z$ and $S$ is the set of prime ideals of $A$ dividing $N$, then the units of $A[1/N]$ are exactly
$U_S$.

The rest of the proof is the same as before, including
Theorem \ref{th:sunit},
with $\mathbb Z$ replaced by $A$ and $\mathbb Q$ replaced by $K$.

This completes the proof of Theorem \ref{th:polyfext}. \end{proof}

This also completes the proof of Theorem \ref{th:fext}. \end{proof}

\section{Does the polynomial have a root?}\label{calc}

The proof of Theorem \ref{th:fext} shows that there exists a win for the last player.
If the last player is Wanda, she finds the desired coefficient easily (see Lemma \ref{le:last}).
But suppose the last player is Nora. Two questions arise: 
\begin{enumerate}
\item How does Nora find the coefficient?
\item Once she finds it, how does she verify that there are no roots in the field?
\end{enumerate}
The cases $i=0$ and $i=d$ can be treated by slight variations of what we do in this section, so we restrict to $i\ne 0, d$.
The proof of Theorem \ref{th:polyfext} shows that (when $i\ne 0, d$) there are only finitely many
$n$ such that $a_i=N^n$ yields a polynomial with a root. Therefore,
Nora can try $a_i=N^n$ for $n=1, 2, 3, \dots$ until she finds the desired polynomial.
In fact, Theorem \ref{th:sunit}  implies a bound on how many $n$ will not yield the desired polynomial,
hence a bound on how far Nora needs to look.

But now, suppose Nora has found what she believes is a good coefficient.
How does she verify that there are no roots in the field? Even more important,
how does she prove to Wanda that there are no roots in the field?

The book \cite[Section 3.6.2]{Cohen} shows how to answer this question. Let $K=\mathbb Q(\theta)$ be an extension of
$\mathbb Q$ of degree $n$, and let $\sigma_1, \dots, \sigma_n$ be the embeddings of $K$ into $\mathbb C$ (these are essentially the Galois group
if $K/\mathbb Q$ is Galois). If $A(x)\in K[x]$, let 
$$
N(A(x))=\prod_{i=1}^n \sigma_i(A(x)),
$$
where $\sigma_i(A(x))$ denotes $\sigma_i$ applied to the coefficients of $A(x)$. Then $N(A(x))\in \mathbb Q[x]$. If $A(x)$ is squarefree,
there is an explicit finite set of rational numbers $k$ such that $N(A(x-k\theta))$ is squarefree when $k$ is a rational number not in this set
(see \cite[Lemma 3.6.2] {Cohen}). 
\begin{theorem} (\cite[Lemma 3.6.3]{Cohen}) Assume that both $A(x)\in K[x]$ and $N(A(x))\in\mathbb Q[x]$ are squarefree. Let $N(A(x))=\prod_j N_j(x)$
be the factorization of $N(A(x))$ into irreducibles in $\mathbb Q[x]$. Then
$$
A(x)=\prod_{j} \gcd(A(x), N_j(x))
$$
is the factorization of $A(x)$ into irreducibles in $K[x]$.
\end{theorem}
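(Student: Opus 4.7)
The plan is to factor $A(x) = \prod_k P_k(x)$ in $K[x]$ into pairwise distinct monic irreducibles (using that $A$ is squarefree), and then match up the $P_k$ with a subset of the $N_j$. Specifically, I aim to show that the map sending each $P_k$ to the unique $N_j$ it divides in $K[x]$ is a well-defined injection. Then for each $j$, either exactly one $P_k$ divides $N_j$ in $K[x]$ (in which case $\gcd(A, N_j) = P_k$) or none does (in which case $\gcd(A, N_j) = 1$). Multiplying over $j$ immediately recovers the claimed factorization.

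First I would verify the existence and uniqueness of the $N_j$ that $P_k$ divides. Existence is easy: $P_k$ divides $A$, which divides $N(A) = \prod_j N_j$, and since $P_k$ is irreducible in the UFD $K[x]$ it must divide some $N_j$. Uniqueness uses that distinct $N_j, N_{j'}$ are coprime in $\mathbb{Q}[x]$ (they are distinct monic irreducibles), and coprimality is preserved under extension to $K[x]$ by B\'ezout.

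The hard part will be showing that at most one $P_k$ divides each $N_j$. The plan is a Galois-conjugation argument that uses squarefreeness of $N(A)$ in an essential way. If $P_k \mid N_j$ in $K[x]$, then since $N_j$ has rational coefficients and is therefore fixed by each embedding $\sigma_i$, applying $\sigma_i$ to the relation gives $\sigma_i(P_k) \mid N_j$ for every $i$. Passing to $\overline{\mathbb{Q}}$, every root of $N(P_k) = \prod_i \sigma_i(P_k)$ is a root of $N_j$, and since $N_j$ is irreducible in $\mathbb{Q}[x]$ I expect to conclude $N(P_k) = c\, N_j^m$ for some nonzero $c \in \mathbb{Q}$ and some $m \geq 1$. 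Then if two distinct $P_k, P_{k'}$ both divided $N_j$, the polynomial $N_j^2$ would divide $N(P_k)\,N(P_{k'})$, which in turn divides $N(A)$, contradicting the hypothesis that $N(A)$ is squarefree.

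Once both one-sided assertions are established, each $P_k$ has a uniquely associated $N_{j(k)}$, and the assignment $k \mapsto j(k)$ is injective. For $j$ in the image, the only irreducible factor of $A$ dividing $N_j$ is the corresponding $P_k$, so $\gcd(A, N_j) = P_k$; for $j$ outside the image, $\gcd(A, N_j) = 1$. Multiplying gives $\prod_j \gcd(A, N_j) = \prod_k P_k = A$ (up to the leading coefficient of $A$, which is the only ambiguity in defining gcds in $K[x]$), and since each non-constant factor $\gcd(A, N_j)$ is a single $P_k$, this is indeed the factorization of $A$ into irreducibles in $K[x]$.
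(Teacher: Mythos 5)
Your proposal is correct, and since the paper gives no proof of this statement (it is quoted from Cohen, Lemma 3.6.3), there is nothing to diverge from: your argument is essentially the standard one behind the cited result, combining multiplicativity of the norm, pairwise coprimality of the $N_j$, and squarefreeness of $N(A)$ to show each $N_j$ picks up at most one irreducible factor of $A$. Two small points you use implicitly and should state: one of the embeddings is the identity, so $A\mid N(A)$ in $K[x]$ (giving existence of $j(k)$), and $N(P_k)\in\mathbb{Q}[x]$ because the Galois group of the normal closure permutes the $\sigma_i$; the latter is what legitimizes the conclusion $N(P_k)=c\,N_j^{m}$ rather than merely the containment of roots.
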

The theorem allows us to determine whether $A(x)$ has a linear factor in $K[x]$, which happens if and only if $A(x)$ has a root in $K$.

If a polynomial $A(x)$  is not squarefree, we can eventually reduce to the squarefree situation by writing 
$$
A(x)=\gcd(A, A')\times\left(A(x)/\gcd(A, A')\right),
$$
where $A'$ is the derivative, and treating each factor separately. 
If $N(A(x))$ is not squarefree, we can translate by a suitable $k\theta$, apply the theorem, and then translate back.

Let's consider an example. Start with the polynomial
$$
f(x)= x^3+(\sqrt{2}-3)x^2+a_1x-4(1+\sqrt{2}).
$$
Nora wants to choose $a_1$ so that $f(x)$ has no roots in $\mathbb Q(\sqrt{2})$.

She first tries $a_1=2$ and computes the product of the Galois conjugates of $f$:
\begin{align*}
&N(f(x))\\
&= \left(x^3+(\sqrt{2}-3)x^2+2x-4(1+\sqrt{2})\right)
\left(x^3+(\sqrt{2}-3)x^2+2x-4(1+\sqrt{2})\right)\\
&= x^6 - 6x^5 + 11x^4 - 20x^3 + 44x^2 - 16x - 16\\
&= (x^2-2x-1)(x^4-4x^3+4x^2-16x+16).
\end{align*}
Computing $\gcd(f(x),\, x^2- 2x-1)$ by the Euclidean algorithm
yields the linear polynomial $(1+2\sqrt{2})(x-1-\sqrt{2})$. This means that $1+\sqrt{2}$ is a root in $\mathbb Q(\sqrt{2})$.

When she tries $a_1=4$, she computes a new $N(f(x))$, obtaining
$$
N(f((x)) = (x-2)^2(x^4-2x^3+3x^2-12x-4).
$$
The squared factor corresponds to the fact that $x=2$ is a root of $f(x)$, but let's ignore this and try to obtain a squarefree $N(f(x))$ in order to use the theorem. Compute
$$
f_1(x)=f(x-\sqrt{2})=x^3-(3+2\sqrt{2})x^2+6(1+\sqrt{2})x-(10+8\sqrt{2}).
$$
Then
$$
N(f_1(x))= (x^2 - 4x + 2)(x^4 - 2x^3 + 3x^2 + 8x - 14).
$$
We have
$$
\gcd(f_1(x), x^2 - 4x + 2) = (8-2\sqrt{2})(x-2-\sqrt{2}).
$$
Therefore, $f(2)=f_1(2+\sqrt{2})=0$, so $f(x)$ has a zero.

Now Nora tries $a_1=8$. The new $N(f(x))$ is
$$
x^6 - 6x^5 + 23x^4 - 56x^3 + 104x^2 - 64x - 16,
$$
which is irreducible in $\mathbb Q[x]$. 
We have 
$$
\gcd(N(f(x)), f(x))= f(x).
$$
Therefore, $f(x)$ does not have a linear factor
in $K[x]$. Nora chooses $a_1=8$ and wins.

\section{$D=\mathbb R$}\label{se:R}

Although one might suspect that the case of $D=\mathbb R$ is easy,
it turns out to be interesting and it uses a strategy that encompasses more than the last turn.

\begin{theorem}\label{th:R}~ Let $D=\mathbb R$. 
\begin{enumerate}
\item If $d=2$, Player I wins.
\item If $d\ne 2$, then Wanda wins.
\end{enumerate}
\end{theorem}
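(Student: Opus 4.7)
My plan is to split by parity of $d$ and by who is Player~I. When $d \geq 3$ is odd, every real polynomial of degree $d$ has a real root by the Intermediate Value Theorem, so Wanda wins regardless of play. When $d$ is even and Wanda is Player~I, the final turn $d+1$ is odd, so Wanda plays it and wins by Lemma~\ref{le:last}. These reductions leave part~(2) to $d \geq 4$ even with Nora as Player~I, and part~(1) to $d=2$ with Nora as Player~I.

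For $d=2$ with Nora as Player~I, Nora plays $a_1 = 0$ on turn~1; after Wanda picks one of $a_0, a_2$ on turn~2, Nora picks the other on turn~3 with the same sign. The discriminant $-4 a_0 a_2 < 0$, so there are no real roots and Nora wins.

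For $d \geq 4$ even with Wanda as Player~II, the plan is for Wanda to rely on two sufficient conditions for a real root: (A)~$a_0 a_d < 0$, giving a root via IVT between $p(0)$ and $p(\pm\infty)$; and (B)~$a_d$ and $\sum_{i\text{ even}} a_i$ have opposite signs, making $a_d(p(1)+p(-1))<0$, so that one of $p(\pm 1)$ has sign opposite $a_d$ and IVT again gives a root. Wanda plays reactively: whenever Nora plays an extreme, Wanda plays the other extreme with the opposite sign on her next turn, securing~(A). If Nora avoids both extremes on turn~1, Wanda plays $a_0 = 1$; thereafter, if Nora plays $a_d$ with matching sign, Wanda plays an interior even-indexed coefficient with a very negative value to secure~(B), while continued avoidance of $a_d$ by Nora lets Wanda play $a_d = -1$ herself to secure~(A).

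The main obstacle is the adversarial sub-case in which Nora controls enough of the interior even-indexed coefficients to foil both (A) and (B). Wanda then plays $a_{d-1} = M$ on her final turn with $|M|$ very large. The resulting polynomial admits no sum-of-squares decomposition $p = P_1^2 + P_2^2$ once $M$ exceeds a threshold depending on the even-indexed coefficients: the leading-coefficient and $x^{d-1}$-coefficient relations force the vector of quadratic coefficients of $P_1, P_2$ to have squared-norm at least $M^2/(4a_d)$, while the $x^{d-2}$-coefficient equation, combined with Cauchy--Schwarz applied to the lower-order coefficients, forces the same norm to lie in a bounded set; for $|M|$ large these two bounds are incompatible. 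Hence $p$ takes a negative value, and since $a_d > 0$ gives $p \to +\infty$, IVT provides a real root, completing the final case.
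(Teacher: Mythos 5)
Your reductions (odd $d$ via the Intermediate Value Theorem, Wanda-plays-last via Lemma~\ref{le:last}, and the $d=2$ endgame with $a_1=0$ and matched signs) coincide with the paper's, and your criteria (A) and (B) are sound; your endgame tool for the hard case --- a nonnegative polynomial of even degree cannot have an odd-indexed coefficient that is huge compared with its even-indexed coefficients --- is a genuinely different (and in principle correct) replacement for the paper's explicit evaluations at $x=1,-1,-2$. The genuine gap is in the game-theoretic bookkeeping, which is exactly where the paper's proof does its work. Your large-$M$ move only wins if, at the moment Wanda plays it, \emph{every even-indexed coefficient is already fixed}; equivalently, you must show the two coefficients still open at Wanda's final turn are never both even-indexed (and if one is even, Wanda takes it herself). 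If an even-indexed coefficient is still open for Nora after Wanda's last move, the obstruction evaporates: for $d=4$, with $a_4,a_0>0$ fixed and Wanda's $a_3=M$ played, Nora answers with $a_2=C$ huge, and $p=x^2\bigl(a_4x^2+Mx+\tfrac{C}{2}\bigr)+\bigl(\tfrac{C}{2}x^2+a_1x+a_0\bigr)>0$ for all $x$ once $C>\max\bigl(M^2/(2a_4),\,a_1^2/(2a_0)\bigr)$, so that line is a loss. Similarly, your claim that playing one interior even coefficient ``very negative'' \emph{secures} (B) is false for $d\ge 6$ as long as Nora still holds an even-indexed coefficient in reserve. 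So you need a statement of the form: Wanda can play so that at her final move at most one even-indexed coefficient remains unset. The paper gets this from the explicit rule that Wanda, until her last move, always plays an even-degree coefficient (setting it to $0$), combined with the fact that Nora is forced to spend a move on $a_d$; your sketch never states or verifies this parity fact, and it does not follow from the moves you describe, since you leave Wanda's replies unspecified when Nora makes a non-threatening play.

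The threshold step itself also needs repair. Your Cauchy--Schwarz computation is tied to the coefficient $a_{d-1}$, which Nora may already have taken before Wanda's last turn, so you need the general statement that if $p\ge 0$ on $\mathbb R$ then \emph{every} odd-indexed coefficient is bounded by a function of the even-indexed ones alone. This is true, and is easiest without sums of squares: $|p(x)-p(-x)|\le p(x)+p(-x)=2q(x)$, where $q$ is the even part, so the odd part of $p$ has sup-norm on $[-1,1]$ at most $2\max_{[-1,1]}q$, and sup-norm on an interval controls all coefficients of a polynomial of bounded degree. As written, your ``$x^{d-2}$ equation plus Cauchy--Schwarz'' argument only closes for $d=4$, where the lower coefficients satisfy $b_0^2+c_0^2=a_0$; for $d\ge 6$ bounding the lower coefficients of $P_1,P_2$ requires an additional argument such as the one just given. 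With the parity bookkeeping supplied and the bound stated for an arbitrary odd coefficient, your route would give a valid alternative to the paper's Cases a and b; without them it does not yet constitute a winning strategy for Wanda.
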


\begin{proof}
If $d$ is odd, then Wanda always wins because odd degree polynomials always have real roots.

Suppose $d=2$. If Player I is Wanda, she is also the last player, so she wins.
If Player I is Nora, then Nora starts by choosing $a_1=0$. Wanda  then chooses $a_0$ or $a_2$ and
Nora finally chooses $a_0=a_2$ and wins.

Henceforth, assume that $d\ge 4$ is even. 
If Nora chooses either $a_0$ or $a_d$, then Wanda chooses the other of these and arranges that
$a_0$ and $a_d$ have opposite signs. Then the polynomial takes opposite signs for $x=0$
and for large positive $x$, so Wanda wins. Therefore, Nora's only hope is to avoid $a_0$ and $a_d$
and try to force Wanda to choose one of them before Nora does. 

The number of coefficients to be chosen is $d+1$, which is odd. If Wanda is Player I, she
also is the last player, so she wins by Lemma \ref{le:last}.
If Nora is Player I, then she is also the last to play, so she has some hope.
But Wanda does the following after Nora makes the first choice (which is not $a_0$ or $a_d$, by the above). On Wanda's first play, she chooses $a_d>0$. Nora must respond by
choosing $a_0>0$. Otherwise, Wanda will choose $a_0<0$ and win the game, as described above.
They are now in the situation where coefficients $a_d$, $a_0$, and $a_j$, for some $j\ne 0, d$, have been
chosen and it is Wanda's turn to choose.

Wanda's strategy is, until her last move, to set some coefficient of even degree equal to 0.
Since $d\ge 4$ and $d$ is even, before Wanda's last move there are two coefficients left to set.
Because of the strategy Wanda uses, it is impossible for both coefficients left to be of even degree.
There are two cases.

\noindent
{\bf Case a:} Of the last two coefficients, one is of even degree and one is of odd degree.
Let the entire polynomial be
$$f(x)=g(x) + a_{2i}x^{2i} + a_{2j+1}x^{2j+1}$$
where $a_{2i}$ and $a_{2j+1}$ have not been determined yet.
It will not matter whether $2i<2j+1$ or $2i>2j+1$.
Note that, since $a_0$ was already set, $2i, 2j+1\ge 1$.

Wanda sets $a_{2i}$ to a value $-A$ that we will determine later.
Nora will respond by setting $a_{2j+1}$ to a value $B$. 
Wanda picks $A$ so that, no matter what $B$
Nora picks, there will be a root.

Since $\lim_{x\to\infty} f(x)=\infty$, we need show only that there is some value $x$
such that $f(x)\le 0$.

Note that
$$f(1)=g(1) - A + B.$$
Hence Nora needs to make
$$g(1)-A+B > 0, \quad B> -g(1)+A.$$
Note also that 
$$f(-1)=g(-1) -A-B.$$
Hence Nora needs to make
$$g(-1) -A-B>0, \quad B < g(-1)-A.$$
Putting these together, Nora needs to pick a $B$ such that
$$-g(1)+A< B < g(-1)-A.$$
If Wanda can find an $A$ such that
$$g(-1)-A < -g(1)+A$$
then Nora cannot pick a winning $B$.
Hence Wanda's winning move is to pick any $A$ with
$$A > \frac{g(1)+g(-1)}{2}.$$

\noindent
{\bf Case b:} The last two coefficients are both of odd degree. Let $g(x)$ be the part of the polynomial
that is already set. Let the entire polynomial be
$$f(x)=g(x) + a_{2i+1}x^{2i+1} + a_{2j+1}x^{2j+1},$$
where $a_{2i+1}$ and $a_{2j+1}$ have not been determined yet. We assume that $i>j$.
Note that $2i+1,2j+1\ge 1$.

Wanda plays by setting $a_{2i+1}$ to a value $A$, which she picks so that, no matter what $B$
Nora picks, there will be a root.

Since $\lim_{x\to\infty} f(x)=\infty$, we need show only that there is some value $x$
such that $f(x)\le 0$. 
Note that
$$f(1)=g(1) + A + B.$$
Hence Nora needs to make
\begin{gather*}
g(1)+A+B > 0,\\
B> -g(1)-A.
\end{gather*}
Note also that 
$$f(-2)=g(-2) -2^{2i+1}A-2^{2j+1}B.$$
Hence Nora needs to make
\begin{gather*}
g(-2) -2^{2i+1}A-2^{2j+1}B>0\\
2^{2j+1}B < g(-2) - 2^{2i+1}A.
\end{gather*}
Putting these together, Nora needs to pick a $B$ such that
$$-g(1)-A < B < g(-2)2^{-2j-1}-2^{2i-2j}A.$$
If Wanda can find an $A$ such that
$$g(-2)2^{-2j-1}-2^{2i-2j}A < -g(1)-A$$
then Nora cannot pick a winning $B$.
Hence Wanda's winning move is to pick any $A$ with
$$A(2^{2i-2j}-1) > g(1)+g(-2)2^{-2j-1}.$$
\end{proof}

\section{$D$ an Algebraically Closed Field }\label{se:C}

The case $D=\mathbb C$, or any other algebraically closed field, is of course trivial.

\begin{theorem}
Suppose $D$ is an algebraically closed field. Then
 Wanda wins.
\end{theorem}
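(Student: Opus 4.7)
The plan is essentially to observe that there is nothing to prove: the rules force the final polynomial to have degree exactly $d \ge 2$ (since $a_d \ne 0$ is required), hence to be non-constant, and by definition every non-constant polynomial over an algebraically closed field has a root in that field. Since $D$ is already a field, its field of fractions is $D$ itself, so the root lies in the field of fractions, which is exactly what Wanda needs to win.

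Concretely, I would write the proof in one line: let $f(x) = a_dx^d + \cdots + a_0$ be the polynomial at the end of the game. By rule (3) of the game, $a_d \ne 0$, so $\deg f = d \ge 2 \ge 1$. Since $D$ is algebraically closed, $f$ has a root in $D$, so Wanda wins regardless of the order of play, regardless of which player is Player I, and regardless of any choices made by Nora.

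The only remark worth making is that this argument does not depend on Lemma~\ref{le:last}, nor on who plays last, nor on any structural property of $D$ beyond algebraic closedness; Nora never had a chance. There is no main obstacle.
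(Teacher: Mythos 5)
Your proof is correct and is essentially the same as the paper's: both simply note that the final polynomial is non-constant (since $a_d\ne 0$ forces degree $d\ge 2$) and hence has a root in the algebraically closed field $D$, which is its own field of fractions. Nothing is missing.
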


\begin{proof}
The polynomial will be nonconstant of degree $\ge 1$, so it has a root
since $D$ is algebraically closed.
\end{proof}

\section{$D$ a Finite Field}\label{se:F}

The case where $D=\mathbb F_q$, the finite field with $q$ elements, brings in some new ideas.

\begin{theorem}\label{th:F} Let $D=\mathbb F_q$.
\begin{enumerate}
\item  If $d\ge 4$ or $d=2$, then
whoever plays last wins. 
\item If $d=3$ and the characteristic of $\mathbb F_q$ is 3, then Wanda wins.
\item If $d=3$ and the characteristic of $\mathbb F_q$ is not 3, then the last player wins.
\end{enumerate}
\end{theorem}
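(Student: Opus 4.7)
The plan is to handle the three cases of Theorem \ref{th:F} separately, invoking Lemma~\ref{le:last} for every direction in which Wanda plays last. What remains is (a) to exhibit Nora's strategy whenever she plays last (in cases~(1) and~(3)), and (b) to exhibit a strategy for Wanda in case~(2) when she is Player~I; when Wanda is Player~II in case~(2), Lemma~\ref{le:last} applies.

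For case~(1) with $d\ge 4$: Nora arranges for her final move to set a middle coefficient $a_i$ with $0<i<d$. Since $d\ge 4$ gives her at least three moves while there are only two ``dangerous'' coefficients $a_0$ and $a_d$, she uses her first one or two moves to occupy them whenever Wanda has not; her last move is then over some middle $a_i$, for which the bad values $\{-g(r)/r^i:r\in\mathbb F_q^{\times}\}$ number at most $q-1<q$, leaving a good choice. For $d=2$ (where Nora is Player~I), she opens with $a_1=0$ in odd characteristic and on turn~3 chooses the remaining end coefficient so that $-a_0/a_2$ is a non-square (which exists for $q\ge 3$); in characteristic~$2$ with $q\ge 4$ she opens with $a_1=1$ and uses surjectivity of the trace to pick the remaining coefficient with $\operatorname{Tr}(a_0 a_2/a_1^2)=1$. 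The remaining tiny case $q=2$, $d=2$ is checked by hand.

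For case~(3), $d=3$ with char~$\ne 3$ and Nora as Player~II: by playing a middle coefficient on turn~1 and then on turn~3 playing whichever coefficient is needed so that only an end is left, Wanda can force Nora to set $a_0$ (or symmetrically $a_3$) on turn~4. Say Nora is forced to set $a_0$; a direct count shows she has a good move iff $P(x):=a_3x^3+a_2x^2+a_1x$ is not a permutation of $\mathbb F_q$. The main algebraic step, which I would state as a lemma, is: in characteristic~$\ne 3$ and with $a_3\ne 0$, $P$ is a permutation of $\mathbb F_q$ iff $3a_1a_3=a_2^2$ and $\gcd(3,q-1)=1$. The proof is to complete the cube ($x\mapsto y-a_2/(3a_3)$), reducing $P$ to $a_3y^3+C'y+\text{const}$ with $C'=a_1-a_2^2/(3a_3)$; the case $C'=0$ becomes cubing being a permutation, and for $C'\ne 0$ the factorization $h(y)-h(z)=(y-z)(y^2+yz+z^2+c)$ together with analysis of the norm form $y^2+yz+z^2=-c$ (in the split case $q\equiv 1\pmod 3$ and the anisotropic case $q\equiv 2\pmod 3$) shows this form always has solutions with $y\ne z$ when $c\ne 0$, so $h$ is not a permutation. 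Given this criterion, Nora's strategy on turn~2 is: if Wanda plays an end, Nora plays the other end, reducing to the middle-last situation; if Wanda plays a middle $a_j=v_j$, Nora plays the other middle with value $0$ if $v_j\ne 0$ and value $1$ if $v_j=0$. A direct check in each subcase of Wanda's turn~3 move verifies that $3a_1a_3\ne a_2^2$ and the symmetric inequality $3a_0a_2\ne a_1^2$ (which governs the reversed polynomial $P_1(x)=a_0x^3+a_1x^2+a_2x$, relevant when Nora is forced to set $a_3$ last) both hold; hence neither $P$ nor $P_1$ is a permutation and Nora wins.

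For case~(2), $d=3$ in characteristic~$3$ with Wanda as Player~I: Wanda opens with $a_2=0$, exploiting the fact that in characteristic~$3$ the polynomial $\alpha x^3+\beta x$ is $\mathbb F_3$-linear on $\mathbb F_q$ and so is a permutation iff its kernel is trivial, iff $-\beta/\alpha$ is not a square in $\mathbb F_q$ (or $\beta=0$, using that cubing is the Frobenius of $\mathbb F_{3^n}$ and hence a permutation). On turn~3 Wanda plays $a_1=0$ whenever Nora played $a_0$ or $a_3$, and plays $a_3=w$ with $-v/w$ a non-square (which exists since $q=3^n$ is odd) whenever Nora played $a_1=v$. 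In each subcase, the polynomial confronting Nora on turn~4 has the shape $a_3x^3+a_0$ (or the reverse), and since the relevant additive polynomial is a permutation, every legal choice of Nora's remaining end coefficient yields a root. The main obstacle I anticipate is the clean verification of the permutation-polynomial criterion in characteristic $\ne 3$, especially the step that $y^3+cy$ is never a permutation for $c\ne 0$, together with the bookkeeping for small fields $\mathbb F_2$, $\mathbb F_3$, and $\mathbb F_4$ where the various counting arguments become tight.
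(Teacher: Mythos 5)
Your proposal is correct, and its game-theoretic skeleton coincides with the paper's: Lemma~\ref{le:last} whenever Wanda moves last; for $d\ge 4$ Nora occupies the end coefficients early and wins on a middle coefficient by the count of at most $q-1$ bad values; for $d=3$, characteristic~$3$, Wanda opens with $a_2=0$ and forces a permutation polynomial of the form $\alpha x^3+\beta x$; for $d=3$, characteristic~$\ne 3$, Nora answers a middle move by making exactly one of $a_1,a_2$ zero and handles the ``forced to play $a_3$'' branch via the reversed polynomial. Where you genuinely diverge is in the algebraic input. The paper simply quotes Dickson's classification of degree-$3$ permutation polynomials and extracts the observation that, in characteristic $\ne 3$, the coefficients of $x^2$ and $x$ in such a polynomial are both zero or both nonzero; you instead prove the needed criterion from scratch ($a_3x^3+a_2x^2+a_1x$ is a permutation iff $3a_1a_3=a_2^2$ and $\gcd(3,q-1)=1$) by completing the cube and analyzing the representations of $-c$ by the form $y^2+yz+z^2$ in the split and anisotropic cases, and in characteristic $3$ you use the additive (kernel) description of $\alpha x^3+\beta x$ rather than Dickson's list. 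This buys self-containedness at the cost of the quadratic-form counting, which you rightly flag; it does go through uniformly (including characteristic $2$) because in the anisotropic case the form is the norm form of $\mathbb F_{q^2}/\mathbb F_q$, representing each nonzero value $q+1>2$ times, while in the split case it represents each nonzero value $q-1$ times and $q\equiv 1\pmod 3$ forces $q\ge 4$; the diagonal $y=z$ contributes at most two points. Your $d=2$ strategy (opening $a_1=0$ and hitting a non-square in odd characteristic, $a_1=1$ and a trace condition in characteristic $2$) also differs from the paper's single uniform strategy with $a_1=1$ and image counting, but both work, and your characteristic-$2$ argument in fact covers $q=2$ as well. Two small points of bookkeeping to tighten: in the characteristic-$3$ case, when Nora plays $a_1=0$ the phrase ``$-v/w$ a non-square'' is vacuous and you should fall back on the $\beta=0$ clause (any $a_3\ne 0$ works, since cubing is the Frobenius); and in that same case the polynomial facing Nora on her last turn is $a_3x^3+a_1x+a_0$, not literally $a_3x^3+a_0$, though your stated reason (the additive part is a permutation, so every legal end coefficient produces a root) is exactly the right one.
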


\begin{proof}
If  Wanda goes last, she wins,  by Lemma~\ref{le:last}.

Henceforth, assume  Nora goes last.
On the last turn, Nora 
wants to choose the remaining coefficient $a_i$ so that the polynomial has no roots $\mathbb F_q$.

Suppose $d\ge 4$. Then Nora can arrange that either she or Wanda chooses values for $a_d$ and $a_0$ before
the last turn. So we may assume that $i\ne 0, d$.
Since $a_0\ne 0$, we cannot have 0 as a root. For each $b\in \mathbb F_q^{\times}$, 
there is exactly one value of $a_i$ for which $b$ is a root. 
As $b$ runs through $\mathbb F_q^{\times}$, 
Nora eliminates at most $q-1$ possible values of $a_i$, so she can pick some $a_i$ so that each 
$b\in \mathbb F_q^{\times}$ is not a root. The resulting polynomial then has no roots.

When $d=2$, Nora starts by choosing $a_1=1$. If Wanda chooses $a_2\ne 0$, then
both $0$ and $-1/a_2$ are roots of $a_2x^2+x$. Therefore, the image of the map
$x\mapsto a_2x^2+x$ from $\mathbb F_q$ to $\mathbb F_q$ has at most $q-2$ nonzero elements. This means that there is a $-a_0\ne 0$
not in the image. This choice of $a_0$ makes $f(x)$ have no roots. Nora wins.
If, instead, Wanda chooses $a_0\ne 0$ on her first play, then Nora must choose $a_2$ such that
$a_2x^2+x+a_0$ has no roots. Each of the $q-2$ values of $x\ne 0, -a_0$ eliminates one value of $a_2$, so at least one
nonzero $a_2$ remains.  Since $x=0$ and $x=-a_0$ are not roots of the resulting polynomial, there are no roots,
and Nora wins. 

We now need a quick interlude on permutation polynomials.
A polynomial $g(x)\in \mathbb F_q[x]$ is called a {\it permutation polynomial}
if the map $x\mapsto g(x)$ gives a permutation of $\mathbb F_q$.
Dickson \cite{Di} classified all permutation polynomials of degree at most 5.
We need only the result for degree 3:

{\it
The only permutation polynomials of degree 3 are of the form $$g(x)=a_3h(x+b)+c,$$ where
$a_3, b, c\in \mathbb F_q$ with $a_3\ne 0$ and $h(x)$ is one of the following:
\begin{enumerate}
\item $x^3$, with $q\not\equiv 1\pmod 3$.
\item $x^3-ax$, with $q\equiv 0\pmod 3$ and $a$ not a square in $\mathbb F_q$.
\end{enumerate} }
Note that the first case can be written as
$$
g(x)=a_3(x^3+3bx^2+3b^2x) + a_3b^3+c.
$$
Therefore, if $g(x)$ is a permutation polynomial of degree 3, and the characteristic is not
3, then the coefficients of $x^2$ and $x$ are either both zero or both nonzero.

We can now treat the case $d=3$. 
If the remaining coefficient is $a_i$ with $i\ne 0, 3$, then
the earlier argument for $d\ge 4$ shows that Nora wins. Therefore, it remains to 
consider the cases where $i=0$ and $i=3$.
 
Write $f(x) = g(x)+a_0$, where $g$ is a polynomial
with $g(0)=0$ and $a_0$ is to be determined.
If every choice of $a_0$ yields a polynomial $f(x)$ with a root in $\mathbb F_q$, then $g(x)$ gives a surjective
map from $\mathbb F_q$ to $\mathbb F_q$, so
$g(x)$ is a permutation polynomial. 

Suppose that the characteristic of $\mathbb F_q$ is 3. Wanda chooses $a_2=0$, so Nora must choose
a coefficient of $a_3x^3+a_1x+a_0$. 

If Nora chooses $a_1=0$, then Wanda chooses $a_3=1$ and each choice
of $a_0$ yields a root. If Nora chooses $a_1\ne 0$, then Wanda chooses $a_3$ so that $-a_1/a_3$ is not a
square in $\mathbb F_q$. Then $a_3x^3 + a_1x$ is a permutation polynomial, so Nora cannot choose
a good $a_0$ and she loses. Therefore, Nora should not choose $a_1$ on her first turn.

If Nora chooses $a_3\ne 0$, then Wanda chooses $a_1=0$, thus yielding the permutation
polynomial $a_3x^3$ for $g(x)$. Thus Nora cannot choose a good $a_0$, hence she loses.

If Nora chooses $a_0\ne 0$, then Wanda chooses $a_1=0$ and again Nora loses.

Therefore, when $d=3$ and the characteristic is 3, Nora loses.

Finally, assume that $d=3$ and the characteristic of $\mathbb F_q$ is not 3. If Wanda 
chooses one of $a_0$, $a_3$, then Nora chooses the other and the argument 
at the beginning
of the proof shows that Nora wins. Therefore, Wanda must choose $a_1$ or $a_2$. 
Nora then chooses the other of $a_1$ and $a_2$ and arranges that exactly
one of $a_1$ and $a_2$ is 0. If Wanda then chooses $a_3$, the resulting $g(x)$ cannot be a permutation polynomial,
so Nora can win. If, instead,  Wanda chooses $a_0$ on her second term, 
then Nora must choose $a_3$. Let $f_1(x)=x^3f(1/x)$
be the reversed polynomial. Since $a_0a_3\ne 0$, we see that $f_1$ has no zeros if and only if $f$ has no zeros.
Because exactly one of $a_1$ and $a_2$ is 0, the polynomial $a_0x^3+a_1x^2+a_2x$ cannot be a permutation
polynomial. Therefore, it is possible to choose $a_3\ne 0$ such that $f_1$ has no zeros. Therefore,
$f$ has no zeros and Nora wins.

This completes the proof of Theorem \ref{th:F}.
\end{proof}

\section{Open Problems}\label{se:open}

\noindent
{\bf 1.} We would like to see an elementary proof of Theorem~\ref{th:fext}.

\noindent
{\bf 2.} The bound $(2^{35}n^2)^{n^3s}$ in Theorem \ref{th:sunit} applies to a very general
situation. Can it be substantially improved in the special situation in which the theorem is applied in the proof of Theorem \ref{th:fext}?

\noindent
{\bf 2.} There are two variants of the game that we leave as open fields of study.
\begin{enumerate}
\item
Let $D$ be a ring rather than an integral domain. For example, what happens if $D=\mathbb Z_n$
where $n$ is not prime?
\item
The parameters include two domains $D_1$ and $D_2$ where $D_1,D_2$ are both subsets of the same
larger domain.
The players pick coefficients from $D_1$; however, the root can be in $D_2$.
\end{enumerate}

A particularly interesting case of $(D_1, D_2)$ is the following. Let $D_1=\mathbb Q$ and let $D_2$ be
the compositum of all Galois extensions of $\mathbb Q$ with solvable Galois group.
Hence we are asking if one of the players can force the polynomial to
have a solution in radicals. For $1\le d\le 4$, Wanda 
wins because of the quadratic, cubic, and quartic formulas.
We would like to know what happens when $d\ge 5$.
These could be called {\it Galois Games}; however, that name
has already been taken by a game involving bad duelists~\cite{galoisgames}.


\begin{thebibliography}{5}


\bibitem{Cohen}
H.~Cohen, {\em A course in computational number theory}, Springer, Berlin, 1993.

\bibitem{galoisgames}
J.~Cooper and A.~Dutle.
\newblock Greedy {G}alois games.
\newblock {\em The American Mathematical Monthly}, 120:441--451, 2013.

\bibitem{Di} L.\ E.~Dickson, {\em Linear Groups with an Exposition of the Galois
Field Theory}, Teubner, Leipzig, 1901; Dover, New York, 1958.

\bibitem{sunit}
J.~Evertse. \newblock The number of solutions of decomposable form equations. \newblock {\em Invent. math.}, 122: 559--601, 1995.

\bibitem{HiSi} M.~Hindry and J.~Silverman, {\em Diophantine Geometry: 
An Introduction}, Springer, New York, 2000.

\end{thebibliography}
\end{document}